\documentclass{article}[10 pts]
\usepackage[frenchb, english]{babel}
\usepackage{titlesec}
\usepackage{pst-node}
\usepackage{tikz-cd} 
\usepackage[T1]{fontenc}
\usepackage{geometry}
\usepackage{amsfonts}
\usepackage{amsfonts,amssymb,amsbsy,amsmath,amsthm}
\usepackage{amsmath,amscd}
\usepackage{amsmath}
\usepackage{xstring}
\usepackage{parskip}
\usepackage{graphicx}
\usepackage{mathrsfs}
\usepackage{fancyhdr} 
\usepackage{lipsum}
\usepackage{imakeidx}
\usepackage{amsthm}
\usepackage{longtable}
\usepackage{sidecap}
\usepackage[numbers,sort&compress]{natbib}
\usepackage{epstopdf}
\theoremstyle{definition}
\newtheorem{thm}{Theorem}[section]
\newtheorem{lem}[thm]{Lemma}

\theoremstyle{definition}

\theoremstyle{remark}

\pagestyle{plain} 
\pagenumbering{arabic}
\graphicspath{ {./Images/.} }
\setlength{\parindent}{0.1em}
\setlength{\parskip}{0.1em}

\geometry{a4paper}
\bibliographystyle{abbrvnat}
\newcommand{\family} {$\mathcal F $ }
\begin{document}

   \begin{center}
        \vspace*{1 cm}
        
        \textbf{\LARGE \bf On Dynamics of $\lambda + \tan z^2 $ }
        
        \vspace{0.5 cm}
        
        \text{\Large {Santanu Nandi}}
        
 \end{center}

\vspace*{1 cm}

{\bf Abstract:} This article discusses some topological properties of the dynamical plane ($z$-plane) of the holomorphic family of meromorphic maps $\lambda  + \tan z^2$ for $ \lambda \in \mathbb C$. In the dynamical plane, we prove that there is no Herman ring, and the Julia set is a Cantor set for the maps when the parameter is in the unbounded  hyperbolic component contained in the four quadrants in the complex plane. Julia set is connected for the maps when the parameters are in other hyperbolic components of the parameter plane. \\

\vspace {1 mm}

{\bf Keywords:} Fatou set; Julia set; periodic points; singular values; immediate basin; critical point; attracting domain. \\

{\bf \section{Introduction}} 

In this article we study the dynamics of the family of translated $\tan z^2 $ maps. We denote this family by $ \mathcal F$. The dynamics of the transcendental meromorphic family of maps $ \lambda \tan z^2 $  has been studied in \cite{nandi_dynamics_2020}, \cite{nandi_combinatorial_2021}. The dynamics of the family \family is much different than the dynamics of $ \lambda \tan z^2. $ 
The origin is not a fixed point for the maps in the family \family, however the origin was always a super-attracting fixed point for the maps in the family $ \lambda \tan z^2.$ The maps in \family are even, and the Fatou components of the maps in \family are symmetric about the origin. \\

For a meromorphic map, there are three possibilities for the iteration of an orbit. A point either has finite orbits (periodic points), terminates at a pole, or has infinite orbit under the iteration of the function. For a transcendental meromorphic function, for more than one pole, the pre-pole  is defined by the set $ \mathcal P = \cup_{n = 0}^{\infty} f^{\circ -n} (\infty) $.  By Picard's Theorem the set $ \mathcal P $ is infinite. We need two very important tools, called Fatou set and Julia set to study the dynamics of the family of maps. The \textit{Fatou set} or \textit{Stable set} contains the points $z$ so that $ f^{\circ n}(z) $ is defined and forms a normal family in a neighborhood of the point $z.$ The set of points that are in the complement of the Fatou set is called the \textit{Julia set}. Clearly, the Fatou set $ \mathcal S$ is open, the Julia set $ \mathcal J $ is closed and the set $ \overline {\mathcal P} \subset \mathcal J.$ So $ \mathcal J $ is non-empty. One can easily see that the Fatou and Julia set are completely invariant.\\  

We denote the \textit{singular point} $A_f$ is the set of points such that the meromorphic function $ f $ has no regular covering at the point. The value of the function at the point is called the \textit{singular value}. If a singular value is algebraic, then it is called a \textit {critical value} where as if it transcendental then it is called an \textit{asymptotic value}. For an asymptotic value $ a_f$, an \textit{asymptotic tract} is defined as the open set $ T $ such that $f(T)$ is a punctured neighborhood around $ a_f.$ The map in the family $ \mathcal F$ has two asymptotic values, $ \lambda \pm i $ and a critical value, the origin. The maps have four asymptotic tracts in each of the four quadrants.  \\

We have arranged this paper as follows. In section 2, we have discussed some basic mapping properties of the maps in \family. In section 3, we give a classification of the stable components of the maps in \family and also proved that there is no Herman rings in the stable components for these maps. In section 4, we see that there are four unbounded hyperbolic components for which the the maps has a single attracting periodic points and all singular values are attracted by that attracting basin. We briefly introduced the symbolic dynamics and proved the main result of this article that the Julia set is Cantor set for a set of parameters in the parameter plane for the maps in \family. \\

{\bf \section{ Properties of the maps in \family}}

\noindent
Consider the regions $ L_{k} = \{ z: \sqrt{\pi (k-1) + ti} < z < \sqrt{\pi k + ti}, \  t \in (-\infty, \infty), \  k \in \mathbb Z \} $. It is clearly seen that for each $ k, \  \sqrt{\pi (k-1) + ti} $ defines hyperbola in the complex plane for $ t \in (-\infty, \infty).$  Each open region bounded by two such curves, mapped to $\mathbb {\hat C}$ by $ f \in \mathcal F.$ The inverse map $ f^{-1} $ is given by the formula:

\begin{center} 
$ f^{-1} (z) = \sqrt{  \frac{1}{2i}\log \Big( \frac{1 + i(z - \lambda)}{1 -  i(z - \lambda)} \Big) } $. 
\end{center}

Hence the $ \Re f^{-1} $ will be given by $\arctan $ function. Therefore we must specify the branch of $\arctan$ when defining the inverse map. We will denote the branches as $ f_{k, \lambda}^{-1} $ in each of the region $ L_k$. \\ 


The zeros of the family of maps in $ \mathcal F $ is of the form $ \pm \sqrt{n \pi}(\pm \sqrt{n \pi} i) $ for $ n \in \mathbb Z$. We label the zeros of the family of maps on the real and imaginary axes respectively by even and odd numbers indexing. For $ n = 2 m$, the zero $p_n =  \sqrt{m \pi} $ lies on the real axis when $ m > 0$ whereas  for $ m < 0 $ and $ n = 2m $ the zero $ p_n = -  \sqrt{|m| \pi} $ lies on the negative real axis. Indexing along the imaginary axis can be done in a similar fashion for $ n = (2m + 1) $.  Thus one can find zeros $ \ldots, p_{-6}, p_{-4}, p_{-2}, p_0, p_2, \ldots $ along real axis and $ \ldots, p_{-5}, p_{-3}, p_{-1}, p_1, p_3, \ldots $ along the imaginary axis. \\

Poles of the family of maps lies on the both real and imaginary axes and are given by $ \pm \sqrt{(n + 1/2){\pi}} $ for $ n \in \mathbb Z.$ For $ m $ positive and $ n = 2m $ the pole is denoted by $ s_n = \sqrt{(m + 1/2){\pi}} $ whereas for $ m $ negative and $ n = 2m $ the pole $ s_n$ is denoted by $ s_n = - \sqrt{|m + 1/2|{\pi}}$. For $ n = (2m + 1) $, a similar indexing can be done for poles along the imaginary axis. Therefore one can find the poles of the family of maps of $ \mathcal F$  as $ \ldots, s_{-6}, s_{-4}, s_{-2}, s_0, s_2, \ldots $ along real axis and $ \ldots, s_{-5}, s_{-3}, s_{-1}, s_1, s_3, \ldots $ along the imaginary axis. \\

\begin{lem}\label{Lem1_sec_1} 
The maps and their derivatives in the family \family follow the symmetry property as below. \\
(i) $ f_{\bar{\lambda}}(\bar{z}) = \overline{f_{\lambda}(z)} .$\\
(ii) $ f_{\bar{\lambda}}'( \bar{z}) = \overline{f_{\lambda}'(z)}$ \\
\end{lem}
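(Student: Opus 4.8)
The plan is to reduce both identities to a single structural fact: complex conjugation $z \mapsto \bar z$ is a field automorphism of $\mathbb{C}$, and $\tan$ (equivalently $\sin$ and $\cos$) has a Maclaurin expansion with real coefficients. Everything else is bookkeeping.

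For part (i), I would start from the explicit formula $f_\lambda(z) = \lambda + \tan(z^2)$ and expand the right-hand side as $\overline{f_\lambda(z)} = \bar\lambda + \overline{\tan(z^2)}$. The crux is the identity $\overline{\tan w} = \tan(\bar w)$, valid wherever $\tan$ is defined. I would establish it by writing $\tan w = \sin w / \cos w$ and invoking that $\sin$ and $\cos$ have real power-series coefficients, so $\overline{\sin w} = \sin \bar w$ and $\overline{\cos w} = \cos \bar w$; dividing gives the claim. Since conjugation respects squaring, $\overline{z^2} = \bar z^{\,2}$, substituting $w = z^2$ yields $\overline{\tan(z^2)} = \tan(\bar z^{\,2})$, whence $\overline{f_\lambda(z)} = \bar\lambda + \tan(\bar z^{\,2}) = f_{\bar\lambda}(\bar z)$, which is exactly (i).

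For part (ii) the cleanest route is to differentiate the identity from (i) rather than recompute from scratch. Replacing $z$ by $\bar z$ in (i) gives $f_{\bar\lambda}(z) = \overline{f_\lambda(\bar z)}$. The function $z \mapsto \overline{f_\lambda(\bar z)}$ is holomorphic by the Schwarz reflection construction, and a short power-series computation shows its complex derivative is $\overline{f_\lambda'(\bar z)}$. Differentiating both sides in $z$ then gives $f_{\bar\lambda}'(z) = \overline{f_\lambda'(\bar z)}$, and replacing $z$ by $\bar z$ once more produces precisely (ii). As a cross-check I would also note the direct computation $f_\lambda'(z) = 2z \sec^2(z^2)$ and apply the same real-coefficient argument to $\sec^2$, which likewise has a real Maclaurin expansion.

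I do not anticipate a genuine obstacle here, since the entire content is the real-coefficient symmetry of the trigonometric functions together with the fact that conjugation commutes with squaring. The only step demanding a little care is the differentiation in (ii): one must confirm that $z \mapsto \overline{f_\lambda(\bar z)}$ is truly holomorphic and that its derivative is $\overline{f_\lambda'(\bar z)}$, rather than some rearrangement. Once that reflection fact is recorded, both identities drop out immediately.
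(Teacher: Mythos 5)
Your proof is correct and takes essentially the same approach as the paper: the paper's own proof is a one-line appeal to ``the properties of conjugates of complex numbers,'' which is exactly what you make precise (conjugation is a field automorphism, $\tan$ and $\sec^2$ have real Maclaurin coefficients, so conjugation commutes with $f_\lambda$ and its derivative). Your version simply supplies the details, including the reflection argument for part (ii), that the paper leaves to the reader.
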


\begin{proof} The proofs follow from the properties of conjugates of complex numbers. \\
\end{proof}


{\bf \section {Classifications of the Fatou components }} 

Let $ S$ be a component of the Fatou set of the maps in $ \mathcal F.$ The map $ f $ maps the component $ S$ to another component of the Fatou set. If the image set contains a singular value, then the map may not be onto. In any case we call the image component $f(S)$ and that satisfy the following dichotomy: \\

(i) There exists integers $ p \neq q > 0$ such that $ f^{{\circ}(p)}(S) = f^{{\circ}(q)}(S)$. The component $ S $ is called \textit {eventually periodic}. \\
(ii) For all integers $ p \neq q $, $ f^{{\circ}(p)}(S) \cap f^{{\circ}(q)}(S) = \emptyset $, and the component $ S$ is called  \textit {wandering domain}.  \\

The qualitative and quantitative behavior of stable components of meromorphic maps are categorized by the nature of the periodic points. Suppose that $ z_0, z_1 = f(z_0), \ldots, z_p = f(z_{p-1}) = z_0 $ be a periodic cycle. We define the \textit{multiplier} or \textit{eigenvalue} of the periodic points by $ \lambda = (f^{{\circ}(p)})'(z_i), \   \  i = 0, 1, \ldots p-1 $. Suppose a stable component $ S $ eventually land on the cycle $ S_0, S_1, \ldots, S_{p-1} $. Then \\

(a) \textit {$ S_i$ is attractive}: Each domain $ S_i$ contains a periodic point of the periodic cycle and the multiplier $ |\lambda | < 1$. Some domain in the cycle must contain a singular value. If the multiplier $ \lambda = 0$ then the critical point itself is a periodic point in the cycle and the periodic cycle is called super-attractive. \\

(b) \textit{$ S_i$ is parabolic:} The multiplier map $ \lambda $ is of the form $ \lambda = e^{2 \pi i \frac{p}{q}}, \  \ (p, q) = 1$. The boundary of each component $ S_i$ contains a periodic  point of the cycle and the points in the domain $ S_i$ are attracted to the periodic point. Some component $ S_i$ must contain a singular value. \\

(c) \textit{$ S_i$ is Siegel Disk:} The multiplier map of the periodic cycle is given by $  \lambda = e^{2 \pi i \theta}, $ where $ \theta $ is an irrational number. There is a holomorphic homeomorphism from the domain $ S_i$ to the unit disk $ \mathbb D $, conjugating the first return map $ f^p$ to an irrational rotation from $ S_i$ to the unit disk $ \mathbb D $. \\

(d) \textit{$ S_i$ is Herman Ring:} Each of the periodic components $ S_i $ is holomophically homeomorphic to the standard annulus and the first return map is conjugate to an irrational rotation of the annulus by a holomorphic homeomorphism. \\

(e) \textit{$ S_i$ is essentially parabolic or Baker Domain:} The periodic domain $ S_i$ contains a point $ z_i$( possibly $\infty$) on its boundary so that $ lim_{n \to \infty} f^{{\circ}(np)}(z) = z_i $ for all $ z \in S_i $ but $ f^{{\circ}(p)}$ is not holomorphic in $ z_i$. If $ p = 1, $ the $ z_i = \infty. $ \\

From \cite{ASENS_1989_4_22_1_55_0}, one can see that the maps in the \family has no essentially parabolic domain. The next theorem shows that there is no Herman ring in the stable set. Therefore stable set can only be classified by the first three types of components as discussed above. \\  

\begin{thm} There is no Herman ring in the Fatou set of $f_\lambda \in $ \family. \\
\end{thm}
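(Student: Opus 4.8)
The plan is to argue by contradiction. Suppose $f_\lambda$ admits a cycle of Herman rings $A_0, A_1, \dots, A_{p-1}$ with $f_\lambda(A_j) = A_{j+1}$ (indices mod $p$), so that the first return map $g = f_\lambda^{\circ p}$ restricted to $A_0$ is holomorphically conjugate to an irrational rotation of a round annulus. In each $A_j$ I would fix an essential (non-contractible) simple closed curve $\gamma_j$, the core curve, chosen so that $f_\lambda(\gamma_j)$ is homotopic to $\gamma_{j+1}$ in $A_{j+1}$; each $\gamma_j$ is then a Jordan curve, and I denote by $V_j$ the bounded component of $\hat{\mathbb C} \setminus \gamma_j$. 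Since the return map is a rotation, $g(\gamma_0)$ is again an essential Jordan curve in $A_0$ homotopic to $\gamma_0$, hence surrounds the same bounded complementary component of the annulus as $\gamma_0$ does.

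First I would treat the case in which $\overline{V_0}$ contains no pole of $g$. Then $g$ is holomorphic on the closed Jordan domain $\overline{V_0}$, and by adapting the classical maximum-modulus argument that forbids Herman rings for entire maps, $g$ maps $\overline{V_0}$ onto the closed Jordan domain bounded by $g(\gamma_0)$; because $g(\gamma_0)$ is homotopic to $\gamma_0$ and surrounds the same hole, a Brouwer/Schwarz-type argument produces a fixed point of $g$ inside $V_0$ whose local behaviour is incompatible with $A_0$ abutting it as a rotation domain. Hence every core curve must enclose a pole of $g$ in its bounded complementary component; equivalently, each $V_j$ traps a prepole of $f_\lambda$.

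The next step is to exploit the two constraints peculiar to $\mathcal F$. On one hand, $f_\lambda$ is even, $f_\lambda(-z) = f_\lambda(z)$, so the involution $z \mapsto -z$ carries Fatou components to Fatou components and, since $f_\lambda^{\circ n}(-z) = f_\lambda^{\circ n}(z)$ for $n \ge 1$, carries the Herman cycle to a symmetric configuration; this pins down the enclosed prepoles against the explicit pole set $\{\pm\sqrt{(n+1/2)\pi}\}$ on the real and imaginary axes described above and forces the relevant objects to occur in $\pm$ pairs. On the other hand, each of the two boundary components of every $A_j$ must lie in the closure of the postsingular set $\overline{P(f_\lambda)}$, where $P(f_\lambda) = \bigcup_{n \ge 0} f_\lambda^{\circ n}(S_\lambda)$ and $S_\lambda = \{\lambda + i,\, \lambda - i,\, v\}$ is the three-point singular set ($v$ the critical value), and the postsingular set must accumulate separately on the inner and on the outer boundary of each ring. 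Since only three singular orbits are available to feed all $2p$ boundary curves while simultaneously leaving a prepole in each $V_j$, counting the available orbits against the boundary components that must be fed, together with the $\pm$ symmetry, yields the contradiction.

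I expect the main obstacle to be the pole-handling step, i.e.\ ruling out that a Herman ring genuinely surrounds a prepole: unlike the entire case, the maximum-modulus argument does not close by itself for a meromorphic map, so the crux is to show that the explicit geometry of the zero/pole lattice of $f_\lambda$ and the evenness symmetry are incompatible with every core curve enclosing a prepole while both boundary components remain fed by only three singular orbits. A secondary technical point is to justify rigorously that each boundary component of a Herman ring lies in $\overline{P(f_\lambda)}$ with accumulation required on both sides; this is standard for rotation domains but must be cited or adapted to the present meromorphic setting with infinitely many poles.
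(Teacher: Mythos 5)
Your first step --- showing every core curve of a Herman ring must enclose a prepole, via the maximum-modulus/fixed-point argument for the pole-free case --- is sound and coincides with the paper's starting point (the paper simply asserts this enclosure). The genuine gap is in the second half, which is where the actual contradiction has to come from. You write that ``counting the available orbits against the boundary components that must be fed, together with the $\pm$ symmetry, yields the contradiction,'' but no such count is carried out, and it is doubtful that it can be: the constraint on rotation domains is only that each boundary component lies in $\overline{P(f_\lambda)}$, and the closure of a \emph{single} singular orbit can accumulate on arbitrarily many curves simultaneously. Finiteness of the singular set (here three singular values) does not by itself exclude Herman rings --- excluding them for finite-type maps always requires additional structure (for rational maps this is Shishikura's surgery; for $\lambda\tan z$ and the present family it is a symmetry-plus-degree argument). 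So as written your proof does not close, and you acknowledge as much when you flag the ``pole-handling step'' as an obstacle rather than resolving it.

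The missing idea, which is the paper's actual mechanism, is degree-theoretic and never appears in your proposal. The evenness symmetry is used not to set up an orbit count but to force the bounded region $E_\eta$ enclosed by a core curve $\eta$ to contain a pole $s_k$ \emph{together with} $-s_k$ (if not, one compares $f(\eta)$ with the symmetric image curve; both would wind around the origin, hence would have to intersect, which they cannot). Once $E_\eta$ contains the symmetric pair $\pm s_k$, the curve $\eta$ winds around the origin, and therefore crosses many of the regions $L_n$ bounded by the hyperbolas $\sqrt{\pi n + ti}$ introduced in Section 2; since $f_\lambda = \lambda + \tan z^2$ maps each such region onto the whole sphere, the image of $\eta$ under the return map winds more than once, contradicting the fact that the first return map of a Herman ring is a degree one (injective) self-map. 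Your symmetry observation points in the right direction --- it is exactly what produces the $\pm$ pair of enclosed poles --- but the decisive winding-number/degree step tied to the explicit strip structure of $\tan z^2$ is absent, and the orbit-counting heuristic you substitute for it does not work.
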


\begin{proof} Let $ X_0, X_1, \ldots, X_n $ be a cycle of Herman rings. Each of this domain $ X_i, \  i = 0, \ldots, n $ is conformally conjugate to an annulus. Let $ f^p: X_i \to X_i$ is a degree one self mapping on a domain $ X_i$.  Let $ \eta $ be a simple closed curve in $ X_i$. Then $ \eta$ must encloses a pre-pole in the bounded region, say $E_\eta$. Choosing $ f^p $ and $ \eta$  suitably, we can assume that $\eta$ passes very close to a pole and $ f $ maps the bounded region $ E_\eta $ to the complement of a bonded set. $ E_\eta $ contains a pole say, $s_k$. \\

We claim that $ E_\eta $ contains $-s_k$. If not, by symmetry, there is an $ \eta i$ in $ i E_\eta $. Therefore $ f(\eta) $ and $ -2\lambda + f(\eta i)$ both have non-zero winding number around the origin. So they must intersect, but they cannot. So $ E_\eta$ contains both $ s_k$ and $-s_k.$  This implies that the curve has a non-zero winding number around the origin and $ \eta^2 $ intersects at-least $2k$ number of vertical strips ${L_n}^2,  \  \  n = -k, -k+1, \ldots k-1, k$  defined in section 2.  Therefore the winding number of $ f^p$ is greater than $ 2k$ contradicts that $ f^p:  X_i \to X_i$ is degree one map. \\ 
\end{proof}

\begin{lem} Let $U $ be an arbitrary small neighborhood of $ \lambda + i$. Then $U$ has a non-empty intersection with  $ \mathcal J_\lambda$  if and only if $\lambda + i$ is a pre-pole of $ f_\lambda$. 
\end{lem}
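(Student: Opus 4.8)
The plan is to reduce the statement to a cleaner assertion and then split into the two implications. First I would observe that $\mathcal J_\lambda$ is closed, so the condition that \emph{every} sufficiently small neighbourhood $U$ of $\lambda+i$ meets $\mathcal J_\lambda$ is equivalent to $\lambda+i\in\mathcal J_\lambda$. Moreover, since the maps in \family have infinitely many poles, one has the standard identity $\mathcal J_\lambda=\overline{\mathcal P}$ (the inclusion $\overline{\mathcal P}\subset\mathcal J_\lambda$ is already recorded in the Introduction, and the reverse inclusion is the classical fact for meromorphic maps with infinitely many poles). Hence the lemma is equivalent to: $\lambda+i\in\mathcal J_\lambda$ if and only if $\lambda+i$ is a pre-pole. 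Throughout I would keep in mind that $\lambda+i$ is an \emph{omitted} asymptotic value with a logarithmic tract, governed by the expansion $f_\lambda(z)-(\lambda+i)\sim 2i\,e^{2iz^2}$ as $\Im z^2\to+\infty$, and that the only critical point of $f_\lambda$ is $z=0$.

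The easy implication is immediate: if $\lambda+i$ is a pre-pole then $\lambda+i\in\mathcal P\subset\overline{\mathcal P}\subset\mathcal J_\lambda$, so $\lambda+i$ itself lies in $\mathcal J_\lambda$ and every neighbourhood $U$ meets $\mathcal J_\lambda$.

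For the converse I would argue by approximation. Assuming $\lambda+i\in\mathcal J_\lambda=\overline{\mathcal P}$, I choose pre-poles $p_j\to\lambda+i$ and let $k_j$ be the least integer with $f_\lambda^{\circ k_j}(p_j)=\infty$. If the orders $k_j$ have a bounded subsequence, say $k_j=k$ infinitely often, then continuity of the meromorphic map $f_\lambda^{\circ k}\colon\mathbb C\to\hat{\mathbb C}$ at $\lambda+i$ forces $f_\lambda^{\circ k}(\lambda+i)=\lim_j f_\lambda^{\circ k}(p_j)=\infty$, so $\lambda+i$ is a pre-pole and we are done. The real content is thus to exclude $k_j\to\infty$. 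Here I would exploit the omitted/asymptotic structure: since $f_\lambda$ is a local biholomorphism at $\lambda+i$ (it is not the critical point $0$, the degenerate case $\lambda=-i$ being handled separately), the local inverse branch sending $\lambda+i\mapsto f_\lambda(\lambda+i)$ transports the accumulation of pre-poles at $\lambda+i$ to accumulation of pre-poles at each forward iterate $a_n=f_\lambda^{\circ n}(\lambda+i)$, so the entire orbit of $\lambda+i$ would lie in $\mathcal J_\lambda$. Combining this with the classification of Fatou components, in particular the absence of Herman rings (the theorem above) and of Baker domains (from \cite{ASENS_1989_4_22_1_55_0}), together with the fact that $\lambda+i$ is a singular value, I would show that an orbit of the omitted asymptotic value that never reaches $\infty$ must be captured by the Fatou set, contradicting $\lambda+i\in\mathcal J_\lambda$.

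I expect this last step to be the main obstacle. The difficulty is precisely that $\mathcal J_\lambda=\overline{\mathcal P}$ contains many points that are not pre-poles, so the conclusion is genuinely special to the omitted asymptotic value and cannot follow from a soft topological argument; it must use the logarithmic tract, in which all preimages of a small disk around $\lambda+i$ escape to $\infty$ and contain no poles. A careful treatment would quantify, via the exponential model $f_\lambda(z)-(\lambda+i)\sim 2i\,e^{2iz^2}$ in the tract, how close to $\lambda+i$ a pre-pole of order $k$ can lie, and establish a lower bound on this distance unless the orbit of $\lambda+i$ itself meets a pole, thereby ruling out $k_j\to\infty$.
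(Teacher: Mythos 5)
Your reduction and your easy half are sound and in fact mirror the paper's own proof skeleton: closedness of $\mathcal J_\lambda$ turns the hypothesis into $\lambda+i\in\mathcal J_\lambda$; the inclusion ``pre-pole $\Rightarrow$ Julia'' gives the forward direction; and density of pre-poles in $\mathcal J_\lambda$ (the paper phrases this via a nested sequence of neighborhoods) produces a sequence of distinct pre-poles $p_j\to\lambda+i$, which is exactly where the paper also arrives. Your treatment of the bounded-order case is correct, modulo a small technicality (you need $f_\lambda^{\circ k}$ to be meromorphic on a full neighborhood of $\lambda+i$, which requires first ruling out accumulation of lower-order pre-poles; this is handled by taking $k$ minimal, or more cleanly by observing that the set of pre-poles of order at most $k$ is closed).

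The genuine gap is the case $k_j\to\infty$, which you yourself flag as ``the real content,'' and the route you sketch for it would fail. From ``every forward iterate $f_\lambda^{\circ n}(\lambda+i)$ is a limit of pre-poles, hence in $\mathcal J_\lambda$'' you propose to reach a contradiction using the classification of Fatou components together with the absence of Herman rings and Baker domains. But that classification constrains \emph{periodic} Fatou components only; nothing in it forces a bounded, non-escaping orbit of a singular value into the Fatou set. For instance, a parameter for which $\lambda+i$ eventually lands on a repelling periodic point would have its singular orbit bounded, contained in $\mathcal J_\lambda$, and never meeting a pole, and no ingredient you invoke excludes this scenario --- so the contradiction cannot come from soft arguments, exactly as you suspect in your closing paragraph. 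The quantitative tract estimate you then outline (a lower bound on how close a pre-pole of order $k$ can lie to $\lambda+i$) is the right kind of statement, but you only announce it; you neither formulate nor prove it. This is precisely the point at which the paper stops doing work and instead invokes Proposition 6.1 of \cite{keen1995dynamics}: the paper's proof is your reduction plus that citation, and the cited proposition is its one substantive ingredient. Your proposal therefore reproduces the paper's skeleton but leaves that ingredient unproved, and the replacement argument you sketch is not salvageable as stated.
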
 

\begin{proof}
If $ \lambda + i  $ is a pre-pole of $ f_\lambda$, clearly $ \lambda + i \in \mathcal J.$  Conversely, if $ \lambda + i$ would be a point of $ \mathcal F $, then we can have a small neighborhood of $ \lambda + i$ that would be in $\mathcal F$, since $ \mathcal F $ is open. Therefore $ \lambda + i$ is in $ \mathcal J$. Any small neighborhood of $ \lambda + i $ contains a pre-pole of $ f_\lambda$ because pre-poles are dense in $ \mathcal J_\lambda.$ Let $ T_1$ be such neighborhood of $ \lambda + i$ and let $ s_1$ be a pre-pole in $\overline{T_1}$. We can have a nested closed sets of neighborhoods $\overline{T_1} \supset \overline{T_2}, \supset \overline{T_3} \supset, \ldots, \supset \overline{T_n}, \ldots $ of $ \lambda + i$ and a sequence of pre-poles $ s_i \in \overline{T_i}$ with $ s_i \neq s_j$ such that the infinite intersections of these closed nested sets is non-empty and contains a single point, namely $ \lambda + i$. Clearly the sequence $ s_n \to \lambda + i $ as $ n \to \infty.$ By proposition 6.1 of \cite {keen1995dynamics}, $ \lambda + i$ is a pre-pole of $ f_\lambda.$ \\  
\end{proof}



{\bf \section{Hyperbolic components in the parameter space}}

One can find the proof of the following theorem in \cite{10.1155/S1085337503205042}. \\
 
\begin{thm} \label{fixed point theorem} 
Let $ \mathcal T$ be a nonempty domain in a complex Banach space $Y.$ Let $s : \mathcal T \to \mathcal T$ be a bounded holomorphic map. If $ s(\mathcal T)$ lies strictly inside $\mathcal T$, then $s$ has a fixed point in $ \mathcal T$. \\
\end{thm}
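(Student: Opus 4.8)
This statement is the Earle--Hamilton fixed point theorem, and the natural route is not a direct Banach-space contraction in the norm but rather an argument through an invariant holomorphic metric. The plan is to equip the domain $\mathcal T$ with the Carath\'eodory--Riffen--Finsler pseudometric $\rho_{\mathcal T}$ and to use the fundamental fact that every holomorphic self-map is \emph{nonexpansive} for this metric (the infinite-dimensional Schwarz--Pick lemma). The hypothesis that $s(\mathcal T)$ lies strictly inside $\mathcal T$ will be exactly what is needed to upgrade nonexpansiveness into a genuine strict contraction, after which a Banach-fixed-point style iteration finishes the proof.

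First I would make the hypotheses quantitative. ``Lies strictly inside'' means there is an $\epsilon > 0$ with $\mathrm{dist}\big(s(\mathcal T),\, Y\setminus\mathcal T\big)\ge \epsilon$, and because $s$ is a bounded map the image $s(\mathcal T)$ has finite diameter, say $M$. I would then introduce the intermediate open set $\mathcal T_\epsilon = \{\, y\in Y : \mathrm{dist}(y, s(\mathcal T)) < \epsilon \,\}$, so that $s(\mathcal T)\subset \mathcal T_\epsilon \subset \mathcal T$ with both inclusions non-degenerate, and factor the map as the composition $\mathcal T \xrightarrow{\ s\ } \mathcal T_\epsilon \hookrightarrow \mathcal T$.

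The core of the proof is the contraction estimate, carried out on this factorization. The map $s\colon \mathcal T\to\mathcal T_\epsilon$ is nonexpansive by Schwarz--Pick, giving $\rho_{\mathcal T_\epsilon}(s(x),s(y)) \le \rho_{\mathcal T}(x,y)$. For the inclusion $\iota\colon \mathcal T_\epsilon \hookrightarrow \mathcal T$ I would prove the strict inequality $\rho_{\mathcal T}(p,q) \le k\,\rho_{\mathcal T_\epsilon}(p,q)$ for $p,q\in s(\mathcal T)$, where $k = \tfrac{M}{M+\epsilon} < 1$; this comparison of the Carath\'eodory metrics of the nested domains reduces, pair by pair, to an explicit estimate on a ball where the metric is computable. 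Chaining the two bounds yields
\[
\rho_{\mathcal T}\big(s(x),s(y)\big) \;\le\; k\,\rho_{\mathcal T_\epsilon}\big(s(x),s(y)\big) \;\le\; k\,\rho_{\mathcal T}(x,y),
\]
so $s$ is a strict $\rho_{\mathcal T}$-contraction. Iterating from any base point $x_0\in\mathcal T$ makes $\{s^{\circ n}(x_0)\}$ a $\rho_{\mathcal T}$-Cauchy sequence; since on the bounded region $s(\mathcal T)$, which sits at distance $\ge\epsilon$ from $\partial\mathcal T$, the metric $\rho_{\mathcal T}$ dominates a fixed multiple of the norm distance, the sequence is also norm-Cauchy. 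Completeness of the Banach space $Y$ gives a norm limit $x^{*}$, which lies in $\mathcal T$ precisely because the image is strictly interior, and continuity of $s$ forces $s(x^{*})=x^{*}$; the strict contraction also delivers uniqueness.

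The hard part will be the quantitative contraction constant $k<1$ for the inclusion $\iota$: establishing the Schwarz-type comparison between $\rho_{\mathcal T_\epsilon}$ and $\rho_{\mathcal T}$ on $s(\mathcal T)$, and then relating $\rho_{\mathcal T}$ back to the ambient norm so that $\rho$-convergence yields actual norm convergence. This last point matters because $\rho_{\mathcal T}$ is a priori only a pseudometric, and only the strict-interiority plus boundedness of $s(\mathcal T)$ rescues both the contraction factor and the completeness needed to locate the limit inside $\mathcal T$.
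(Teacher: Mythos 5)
First, a point of reference: the paper does not prove this statement at all --- it is the Earle--Hamilton fixed point theorem, quoted with a citation to the literature --- so your proposal can only be judged against the standard published proof. Your overall strategy is exactly that standard proof: the Carath\'eodory--Riffen--Finsler pseudometric, Schwarz--Pick nonexpansiveness, strict interiority upgraded to a strict $\rho$-contraction, then a completeness argument. Where you differ is in how the contraction factor is produced. The classical argument gets it more cheaply than your factorization: fixing $y$, one perturbs the map itself, setting $g(x) = s(x) + \tfrac{\epsilon}{2M}\bigl(s(x)-s(y)\bigr)$, which still maps $\mathcal T$ into $\mathcal T$ because $\|g(x)-s(x)\|\le \epsilon/2$; Schwarz--Pick applied to $g$ at $x=y$ gives $\bigl(1+\tfrac{\epsilon}{2M}\bigr)\,\alpha_{\mathcal T}\bigl(s(y),Ds(y)v\bigr)\le \alpha_{\mathcal T}(y,v)$, a uniform infinitesimal contraction, with no comparison of metrics of nested domains needed. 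Your route through $\mathcal T_\epsilon$ can be made to work, but the comparison $\rho_{\mathcal T}\le k\,\rho_{\mathcal T_\epsilon}$ with $k=M/(M+\epsilon)$ is more delicate than you indicate: the dilation argument produces that factor only for the infinitesimal metric at points of $s(\mathcal T)$, whereas integrating requires the pointwise bound along entire curves in $\mathcal T_\epsilon$, and near the outer boundary of $\mathcal T_\epsilon$ the admissible dilation factor degenerates to $1$ (you know nothing about $\mathcal T$ beyond the fact that it contains $\mathcal T_\epsilon$). Interposing a smaller neighborhood such as $\mathcal T_{\epsilon/2}$ repairs this, at the cost of a different constant $k<1$.

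The genuine gap is in your final step. You claim that because $s(\mathcal T)$ is bounded and sits at distance $\ge\epsilon$ from $\partial\mathcal T$, the metric $\rho_{\mathcal T}$ dominates a fixed multiple of the norm there. That inequality goes the wrong way: distance to the boundary controls the metric from \emph{above} (Schwarz on an inscribed ball gives $\alpha_{\mathcal T}(x,v)\le \|v\|/\epsilon$), while the \emph{lower} bound $\rho_{D}(p,q)\ge\|p-q\|/\operatorname{diam}(D)$ comes from rescaled linear functionals and requires the \emph{domain} $D$ itself, not merely the subset, to be bounded. The theorem does not assume $\mathcal T$ bounded --- only $s(\mathcal T)$ --- and for unbounded $\mathcal T$ your claim can fail outright: for $\mathcal T=Y$, Liouville's theorem forces every holomorphic map $\mathcal T\to\mathbb D$ to be constant, so $\rho_{\mathcal T}\equiv 0$ and dominates no multiple of the norm (the theorem survives there only because such an $s$ is constant). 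The repair is cheap and simultaneously fixes the first issue: $\mathcal T_\epsilon$ is a bounded domain (connected, being a union of $\epsilon$-balls centered on the connected set $s(\mathcal T)$), $s$ restricts to a holomorphic self-map of $\mathcal T_\epsilon$ whose image still lies at distance $\ge\epsilon$ from the complement of $\mathcal T_\epsilon$, and any fixed point of this restriction is a fixed point of $s$. So you may assume from the outset that $\mathcal T$ is bounded, run your contraction argument (or the classical perturbation trick) for $\rho_{\mathcal T_\epsilon}$, and only then are the norm comparison, the Cauchy property of the iterates, and the location of the limit inside $\mathcal T$ all legitimate.
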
  

We use the above theorem in the domain of real numbers to prove the following theorem. \\

\begin{figure}
\centering
\includegraphics[height= 6 cm, width=8 cm]{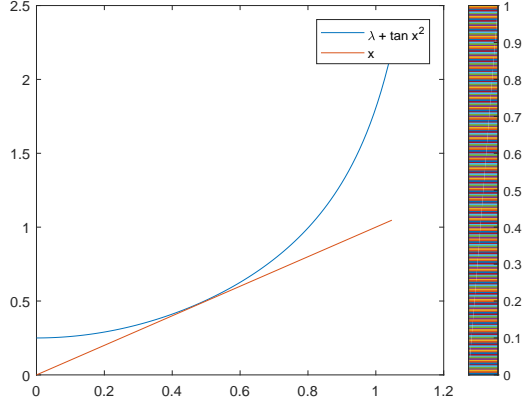} 
\caption{The map has one real fixed point for $ \lambda = \frac{1}{4}  $} 
\end{figure}

\begin{figure}
\centering
\includegraphics[height= 6 cm, width=8 cm]{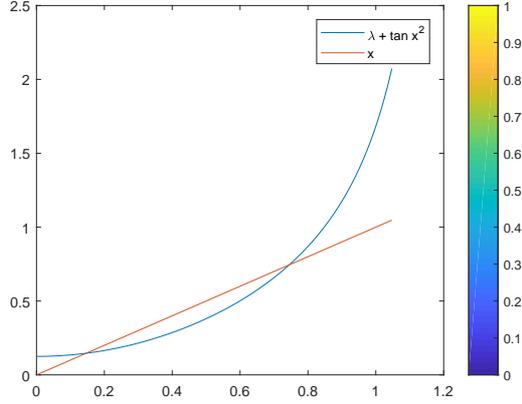}
\caption{ The map has two real fixed points for $ \lambda = \frac{1}{8} $} 
\end{figure}

\begin{thm} For $ \lambda \in \mathbb R $ and $ |\lambda| < \frac{1}{4}$, the origin is attracted by an attracting cycle. \\
\end{thm}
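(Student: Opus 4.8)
The plan is to exhibit the attracting cycle as a single attracting fixed point on the real axis, produced by applying Theorem~\ref{fixed point theorem} to a small forward-invariant region. Since $\lambda$ is real, $f_\lambda$ preserves $\mathbb R$, and since the origin is the critical point with critical value $f_\lambda(0)=\lambda$, its entire forward orbit $0,\ \lambda,\ \lambda+\tan(\lambda^2),\dots$ is real. It therefore suffices to trap this orbit in a region on which $f_\lambda$ is a contraction and to identify the resulting limit as an attracting fixed point.

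First I would take the symmetric region $I=(-\tfrac12,\tfrac12)$, which works uniformly for both signs of $\lambda$ and is natural given that $f_\lambda$ is even. On $I$ we have $z^2\in[0,\tfrac14)$, safely away from the first pole $z^2=\pi/2$ of $\tan$, so $f_\lambda$ is bounded and holomorphic there with $f_\lambda(I)=[\lambda,\ \lambda+\tan(\tfrac14))$. The strict containment $\overline{f_\lambda(I)}\subset I$ then reduces to the two inequalities $-\tfrac12<\lambda$ and $\lambda+\tan(\tfrac14)<\tfrac12$, the latter being the binding one for $\lambda>0$. The threshold $\tfrac14$ is exactly the one predicted by the quadratic model $\tan(z^2)\approx z^2$: the condition $\lambda+r^2<r$ at the optimal radius $r=\tfrac12$ is precisely $\lambda<\tfrac14$, consistent with the two figures, in which the fixed points collide at $\lambda=\tfrac14$.

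With strict forward-invariance in hand, Theorem~\ref{fixed point theorem}, applied to $I$ viewed as a real domain (or, to match the hypotheses verbatim, after complexifying to the disk $\{|z|<\tfrac12\}$ and noting that the fixed point is real by the reality of $\lambda$), produces a unique fixed point $x^\ast\in I$. Because $f_\lambda(I)$ lies strictly inside $I$, the map $f_\lambda$ contracts $I$ in its hyperbolic metric, so $|f_\lambda'(x^\ast)|<1$ and every orbit beginning in $I$ converges to $x^\ast$. In particular $0\in I$ and $f_\lambda(0)=\lambda\in I$, so the orbit of the origin remains in $I$ and tends to $x^\ast$; hence the origin is attracted by the attracting (period-one) cycle $\{x^\ast\}$, which is the assertion.

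The step I expect to be the main obstacle is the verification of strict forward-invariance with the clean constant $\tfrac14$, precisely because $\tan(x^2)$ slightly exceeds the quadratic model $x^2$ on $I$: the naive estimate $\lambda+\tan(\tfrac14)<\tfrac12$ only yields $\lambda$ up to $\tfrac12-\tan(\tfrac14)\approx 0.2447$, a shade below $\tfrac14$. To recover the stated threshold one must optimize the radius of the trapping interval, or sharpen the bound on $\tan(z^2)$ over $\{|z|\le\tfrac12\}$ (where the modulus is largest in the real direction), balancing the loss from $\tan$ against the gain from the optimal radius. Once the trapping region is fixed, the remaining points---reality of $x^\ast$, the contraction estimate, and convergence of the critical orbit---are routine.
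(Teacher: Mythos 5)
Your strategy---trap the real critical orbit in an interval, produce a fixed point via Theorem~\ref{fixed point theorem}, and get attraction from the Schwarz--Pick contraction---is in essence the same as the paper's own proof (which also builds a real fixed point by a contraction argument and then bounds the multiplier), so the approaches are not genuinely different. The problem is the step you flagged yourself: it is not a removable technicality, and your proposed repair does not close it. Since $\tan$ has nonnegative Maclaurin coefficients, $|\tan(z^2)|\le\tan(|z|^2)$, so the disk $\{|z|<r\}$ (or the interval $(-r,r)$) is strictly forward-invariant exactly when $|\lambda|<r-\tan(r^2)$. Optimizing over the radius means maximizing $g(r)=r-\tan(r^2)$, whose critical point solves $2r\sec^2(r^2)=1$, giving $r^{\ast}\approx 0.475$ and $g(r^{\ast})\approx 0.2455$. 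So the best constant that your method---or any refinement of the trapping-region idea---can produce is $\lambda^{\ast}\approx 0.2455$, still strictly below $\tfrac14$. There is nothing to ``sharpen'': $\tan(x^2)>x^2$ on $(0,\tfrac12]$ is not an estimate but the exact function, and the loss it causes is real.

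Worse, $\lambda^{\ast}$ is sharp for the statement itself, not merely for the method. For $\lambda>0$ every real fixed point $x$ with $|x|<\sqrt{\pi/2}$ is positive and satisfies $\lambda=x-\tan(x^2)\le\lambda^{\ast}$, while every real fixed point with $|x|>\sqrt{\pi/2}$ has multiplier $|2x\sec^2(x^2)|\ge 2\sqrt{\pi/2}>1$ and is repelling. Hence for $\lambda\in(\lambda^{\ast},\tfrac14)$ there is no attracting real fixed point at all; and since $f_\lambda(x)-x\ge\lambda-\lambda^{\ast}>0$ on $[0,\sqrt{\pi/2})$, the (necessarily real) orbit of the origin increases monotonically until it escapes past $\sqrt{\pi/2}$, so it converges to no fixed point, and being real it cannot converge to a non-real cycle either. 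The saddle-node bifurcation suggested by the paper's figures actually occurs at $\lambda^{\ast}\approx 0.2455$, not at $\tfrac14$: at $\lambda=\tfrac14$ the graph of $f_\lambda$ already lies strictly above the diagonal, by a gap of about $0.0045$. The clean constant $\tfrac14$ is an artifact of the quadratic model $\tan(z^2)\approx z^2$, and the paper's own proof commits exactly this substitution when it passes from $|\lambda+\tan(\alpha^2)|<\tfrac12$ to $|\lambda|<\tfrac14$. Your write-up, made rigorous, proves the theorem for $|\lambda|<\tfrac12-\tan\!\left(\tfrac14\right)\approx 0.2447$ (or up to $\lambda^{\ast}$ with the optimal radius), but the advertised range $|\lambda|<\tfrac14$ cannot be reached by this route---nor by the paper's; the honest fix is to replace the constant $\tfrac14$ by $\lambda^{\ast}=\max_{r}\left(r-\tan(r^2)\right)\approx 0.2455$.
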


\begin{proof} If $\lambda$ is in $\mathbb R$, the orbit of the origin belongs to $\mathbb R$. If the function $f_\lambda$ has an attracting periodic fixed point, the attracting periodic points lie on $\mathbb R.$ To see the existence of a real fixed point of $ f_\lambda$, we prove that the map is a contraction mapping in some bounded domain of $ \mathbb R.$ The derivative $ f'_\lambda(x) $ is an increasing function in $[0, \pi/2)$ with  $f'_\lambda(0) = 0 $ By Lipschitz condition and Mean Value Theorem, there is $ x, y, c \in \mathbb R $ such that $ | f_\lambda(x) - f_\lambda(y)| \leq k |x - y| $ for $ k = |f'_\lambda(c)| < 1$.
Therefore  $f_\lambda $ has at-least a fixed point in the interval $[0, \pi/2),$ say $ \alpha$. The multiplier map for the fixed point $ \alpha$ is given by $ |f'_\lambda(\alpha)| .$ For $ \alpha$ to be an attracting fixed point, we need to have $|f'_\lambda(\alpha)| = | 2 \alpha ( 1 + \tan^2 \alpha^2)| < 1,$ implies $ |\frac{1}{2 \alpha}| > 1, $ or $ |\alpha| < 1/2.$  Therefore $ |f_\lambda(\alpha)| = |\lambda + \tan(\alpha)^2 | < 1/2, $ or $ |\lambda | < 1/4$. 

Let $\alpha$ be an attracting periodic cycle. For $|f'_\lambda(\alpha)| = 2|\alpha||1+(\alpha-\lambda)^2| < 4|\alpha^2\lambda| $ to be less than 1, we need to have $|\alpha^2 \lambda| < 1.$ This implies that for all $\lambda,$ with $|\lambda| < 1/4$ the function has an attracting fixed point $\alpha$ with $ |\alpha|^2 > 4.$  

As $ |\lambda + \tan \lambda^2 | < |\lambda| + |\tan \lambda^2| < \frac{1}{4}+ \frac{1}{4^2} $, one can check the orbit $f_\lambda^n(\lambda)$ of the critical value $\lambda$ is bounded by $ \frac{1}{4} + \frac{1}{4^{2n}} $ , i.e. $|f_\lambda^n(\lambda)| <  \frac{1}{4} + \frac{1}{4^{2n}}$. Clearly, the origin must be attracted by the attracting fixed point $\alpha.$ 
\end{proof}

\begin{figure}
\centering
\includegraphics[height= 6 cm, width=8 cm]{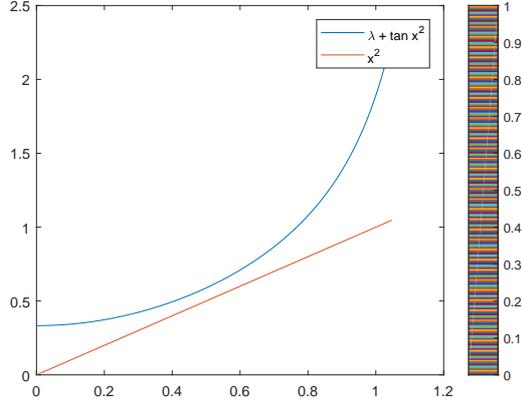} 
\caption{ The map has no fixed point for $ \lambda = \frac{1}{3} $} 
\end{figure}

\begin{lem} \label{attract_singular_values}There are four unbounded regions $ A_1, A_2, A_3, A_4 $ in the four complex quadrants such that for $ \lambda \in A_i, i = 1, 2, 3, 4,$  all singular values of $ f_\lambda$ are attracted by an attracting fixed point. \\
\end{lem}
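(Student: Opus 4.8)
The plan is to produce, inside each open quadrant, an unbounded region lying deep in the quadrant (bounded away from both coordinate axes) on which $f_\lambda$ has a single \emph{attracting fixed point} whose immediate basin absorbs all three singular values $\lambda$, $\lambda+i$, $\lambda-i$ after one iteration. Everything rests on the elementary asymptotics of $\tan$: writing $\zeta=u+iv$ one has $\tan\zeta=i\,\dfrac{1-e^{2i\zeta}}{1+e^{2i\zeta}}$, and setting $q=e^{2i\zeta}$ (so $|q|=e^{-2v}$) a direct computation gives $\tan\zeta-i=\dfrac{-2iq}{1+q}$ and $1+\tan^2\zeta=\dfrac{4q}{(1+q)^2}$. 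Hence, for $v$ sufficiently large, $|\tan\zeta-i|\le 3e^{-2v}$ and $|1+\tan^2\zeta|\le 8e^{-2v}$. First I would record these two bounds with $\zeta=z^2$, noting $\Im(z^2)=2\,\Re z\,\Im z$: wherever $\Im(z^2)$ is large and positive, $f_\lambda(z)=\lambda+\tan(z^2)$ is exponentially close to the constant $\lambda+i$, and the multiplier $f'_\lambda(z)=2z\bigl(1+\tan^2(z^2)\bigr)$ satisfies $|f'_\lambda(z)|\le 16\,|z|\,e^{-2\Im(z^2)}$.

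Second, I would treat the case $\Re\lambda\,\Im\lambda>0$ (quadrants $1$ and $3$), where $\Im(\lambda^2)>0$. Put $w_0=\lambda+i$ and let $D=D(w_0,r)$ be the disk of a fixed small radius $r$ about $w_0$. The key computation is $\Im\bigl((\lambda\pm i)^2\bigr)=2\,\Re\lambda\,(\Im\lambda\pm1)$, so by imposing $|\Im\lambda|>1$ together with $|\Re\lambda\,\Im\lambda|$ large I can force $\Im(w^2)\ge M$ for a large threshold $M$ simultaneously for every $w\in\overline D$ and at each of the three singular values; in particular $\Im(w^2)\ge M>0$ on $D$ keeps $w^2$ off the real axis, where the poles $z^2=\pi(k+\tfrac12)$ sit, so $f_\lambda$ is holomorphic and bounded on $D$. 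The first step then yields $f_\lambda(\overline D)\subset D\bigl(w_0,\,3e^{-2M}\bigr)\Subset D$ once $3e^{-2M}<r$. By Theorem \ref{fixed point theorem}, applied with $\mathcal T=D$ in the Banach space $\mathbb C$, the map $f_\lambda$ has a fixed point $\alpha\in D$; and since $f_\lambda(\overline D)$ is compactly contained in $D$, the map is a strict contraction in the hyperbolic metric of $D$, so $\alpha$ is attracting, unique in $D$, and $f_\lambda^{\circ n}(w)\to\alpha$ for every $w\in D$. Taking $A_i$ to be the set of $\lambda$ in the quadrant obeying these inequalities gives the required unbounded regions in quadrants $1$ and $3$.

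Third, since $f_\lambda(\lambda)$, $f_\lambda(\lambda+i)$, $f_\lambda(\lambda-i)$ all lie in $D(w_0,3e^{-2M})\subset D$ by the threshold choice, all three singular values are attracted to $\alpha$, which finishes those two quadrants. The case $\Re\lambda\,\Im\lambda<0$ (quadrants $2$ and $4$) is \emph{verbatim} the same with $i$ replaced by $-i$, the attracting fixed point now sitting near $\lambda-i$; alternatively it follows from the conjugation symmetry of Lemma \ref{Lem1_sec_1}(i), which carries a region in quadrant $1$ to one in quadrant $4$ and sends $\alpha$ and the singular values to their complex conjugates. Assembling the four pieces produces $A_1,A_2,A_3,A_4$.

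The step I expect to be the main obstacle is the uniform control in the second paragraph: the displacement and multiplier estimates both carry the large factor $2z\approx 2\lambda$, so the exponential gain $e^{-2\Im(z^2)}$ must be made to beat the linear growth in $|\lambda|$. This is what forces $A_i$ to stay bounded away from both axes — concretely, $\Re\lambda\,\Im\lambda$ must grow faster than $\log|\lambda|$ — and it requires checking that the trapping disk $D$ neither meets the poles of $\tan(z^2)$ nor drifts into a region where $\Im(w^2)$ is small. Keeping the radius $r$ fixed while $\lambda\to\infty$ inside the quadrant is precisely what makes these estimates mutually consistent.
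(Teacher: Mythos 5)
Your proposal follows essentially the same route as the paper's own proof: both exploit the exponential decay of $\tan(z^2)\mp i$ deep in each quadrant to show that $f_\lambda$ maps a disk about the asymptotic value $\lambda \pm i$ strictly into itself, then invoke Theorem \ref{fixed point theorem} to produce an attracting fixed point whose basin captures all the singular values. Your version is simply more explicit where the paper is sketchy (the $q$-expansion bounds, the computation $\Im\bigl((\lambda\pm i)^2\bigr)=2\,\Re\lambda\,(\Im\lambda\pm1)$ that handles the second asymptotic value, and the conjugation symmetry for the remaining quadrants), but the underlying argument is the same.
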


\begin{proof} Let $ \lambda = \lambda_1 + i \lambda_2 $ and  $f_\lambda$ is holomorphic in $\lambda$. Then, $ |f_\lambda' (\lambda)| = 2 |\lambda||\sec^2 \lambda^2| \approx 4 |\lambda| e^{\pm \Im (\lambda^2)} $.

If $ \Im (\lambda^2) \to \mp \infty,$ then $ |f_\lambda' (\lambda)| \to 0.$  So there exists $ T>0$ such that for all $ \lambda$ with  $|\Im \lambda^2| > T$, $ |f_\lambda' (\lambda)| < 1 $. Consider the region $ A_i =\{ \lambda: \lambda_1 \lambda_2 > T\} , i = 1, 2, 3, 4$. So each of these regions is an unbounded region in each quadrant separated by a hyperbola $  \lambda_1 \lambda_2 > T, \     \    \lambda_1, \lambda_2 \in \mathbb R$ in the complex plane. \\

For each asymptotic value $ \lambda + i$ (or $ \lambda - i$) in $ A_i$, consider a ball $ B(\lambda + i, M) = \{ z : |z - (\lambda + i) | < M\}$ for some $M > 1$  (or $ B(\lambda - i, M)$. We will show that for each $ z \in B(\lambda + i, M),\   \   f_\lambda(z) \in B(\lambda + i, M).$ Since $ B(\lambda + i, M) $ is a Banach space, by Theorem \ref{fixed point theorem}, $ f_\lambda$ has an attracting fixed point inside the ball $ B(\lambda + i, M)$.  $ |f_\lambda(z) - (\lambda + i)|  \approx {|e^{\pm \Im(z^2)} - 1|} < M $ for suitably chosen $ z$ in $ A_i, \  i = 1, 2, 3, 4$.  Therefore, for any $ \lambda \in A_i, \  \  i = 1, 2, 3, 4,  \   \   f_\lambda $ has an attracting fixed point. However, the attracting basin of the attracting cycle contains the origin because $ \lambda $ is in $ B(\lambda + i, M) $ and $ \lambda $ is the image of the origin under $f_\lambda.$ The orbit of the other asymptotic value eventually land on the orbit of $ \lambda + i$ (or $ \lambda - i$) and hence in the corresponding attracting basin of $f_\lambda $.

\end{proof}   

\begin{thm} \label{Cantor set}
If $ \lambda \in A_i, \  i = 1, 2, 3,4 $, Julia set of $ f_\lambda $ is a Cantor set and the Julia set is topologically conjugate to shift automorphism $\sigma|_{\sum}.$ \\

\end{thm}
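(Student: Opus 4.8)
The plan is to realize $\mathcal{J}_\lambda$ as the space of itineraries of points among the fundamental domains $L_k$ of Section~2 and to recognise this itinerary map as a conjugacy onto the shift. The first step is to exploit Lemma~\ref{attract_singular_values}: for $\lambda \in A_i$ both asymptotic values $\lambda \pm i$ and the critical value $\lambda$ are attracted to the single attracting fixed point, so the whole postsingular set lies in one attracting basin $\Omega \subset \mathcal{F}_\lambda$. Hence $\mathcal{J}_\lambda$ is disjoint from $\overline{\Omega}$ and from every singular value, and on a neighbourhood $W$ of $\mathcal{J}_\lambda$ obtained by deleting a forward-invariant neighbourhood of the postsingular set, each branch $f_{k,\lambda}^{-1}$ of Section~2 is a single-valued holomorphic map sending $W$ into the domain $L_k$.

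The second step is uniform contraction of the inverse branches. Because every singular value is captured by the attracting cycle the map is hyperbolic on $\mathcal{J}_\lambda$, so there is a conformal metric near $\mathcal{J}_\lambda$ in which $f_\lambda$ is expanding and, dually, in which every composite $f_{s_0,\lambda}^{-1}\circ\cdots\circ f_{s_{n-1},\lambda}^{-1}$ contracts $\overline{W}$ with diameter tending to $0$; in particular each branch maps $\overline{W}$ strictly inside itself, the hypothesis of Theorem~\ref{fixed point theorem}. Quantitatively I would read off expansion from $|f_\lambda'(z)| = 2|z|\,|\sec^2(z^2)| \asymp 4|z|\,e^{\pm\Im(z^2)}$, which is bounded below by a constant $>1$ once $z$ lies outside $\Omega$. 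This is the step I expect to be the main obstacle: the alphabet $\{L_k\}$ is countably infinite and the domains accumulate toward the asymptotic tracts and toward $\infty$, so I must control the branches uniformly near the asymptotic values and verify that admissible itineraries stay away from the neighbourhoods swallowed by $\Omega$.

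The third step builds the conjugacy. For an admissible sequence $\underline{s}=(s_0,s_1,\dots)$ define $h(\underline{s})$ to be the unique point of the nested intersection $\bigcap_{n\ge 0} f_{s_0,\lambda}^{-1}\circ\cdots\circ f_{s_{n-1},\lambda}^{-1}(\overline{W})$, which is a singleton by compactness together with the contraction estimate. I would then check that $h$ is continuous, injective (distinct sequences are separated because the interiors of the $L_k$ are pairwise disjoint and expansion resolves them at a finite level), and surjective (every point of $\mathcal{J}_\lambda$ has a well-defined itinerary since its orbit meets no singular value), and that $f_\lambda\circ h = h\circ\sigma$ directly from the construction, giving the claimed topological conjugacy with $\sigma|_{\Sigma}$.

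Finally, to identify $\mathcal{J}_\lambda$ as a Cantor set I would verify the three defining properties in $\hat{\mathbb{C}}$: closedness (immediate), perfectness (the pre-poles are dense in $\mathcal{J}_\lambda$ with no isolated points, or transport perfectness of $\Sigma$ through $h$), and total disconnectedness (two distinct Julia points lie in different domains $L_k$ after finitely many iterates by expansion, so no connected subset larger than a point survives). These properties, carried across the homeomorphism $h$, give both conclusions at once; the evenness $f_\lambda(-z)=f_\lambda(z)$ and the resulting pairing of the $\pm$ domains is a bookkeeping matter to be settled when fixing the admissible alphabet of $\Sigma$.
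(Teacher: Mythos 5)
Your strategy is in substance the paper's own: confine the postsingular set to a single attracting basin via Lemma~\ref{attract_singular_values}, pull back a neighbourhood of $\mathcal{J}_\lambda$ by the inverse branches, show the nested pullbacks shrink to points, and read off a conjugacy between $\mathcal{J}_\lambda$ and the shift. The only cosmetic difference is bookkeeping: the paper indexes its symbols by the poles (the bounded components $B_{n}$ of the preimage of $\overline{V}$, one pole in each) rather than by the fundamental domains $L_k$, and it simply asserts the diameter decay that you propose to justify with an expanding conformal metric (your Euclidean estimate $|f_\lambda'(z)|>1$ off the basin is dubious, but the hyperbolic-metric version you hedge with is the standard and correct route).

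There is, however, one genuine gap. Your surjectivity step reads: ``every point of $\mathcal{J}_\lambda$ has a well-defined itinerary since its orbit meets no singular value.'' This is false for prepoles: if $f_\lambda^{m}(z)=\infty$, the orbit of $z$ terminates at step $m$, so $z$ has only a finite itinerary and corresponds to no element of the space of infinite sequences. Prepoles are dense in $\mathcal{J}_\lambda$ (the paper uses this fact itself), and $\infty\in\mathcal{J}_\lambda$, so your map $h$ fails to be a bijection on a dense subset. The same problem appears topologically: $\mathcal{J}_\lambda$ is compact in $\hat{\mathbb{C}}$, while the space of infinite sequences over the countably infinite alphabet $\{L_k\}$ with the product topology is not compact, so no homeomorphism between them can exist as stated, and any continuity-plus-compactness argument for $h^{-1}$ must break down. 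The paper's proof confronts exactly this point by adjoining terminated words: for $z$ with $f^m(z)=\infty$ it assigns the finite string $(s_1,\ldots,s_m,\infty)$, i.e.\ it compactifies the symbol space by words ending in the symbol $\infty$, and proves injectivity and surjectivity separately for these. Your construction goes through once you make the same amendment --- enlarge $\Sigma$ to include $\infty$-terminated words, check that long cylinders around a prepole's word converge to it, and verify $\sigma$ remains well defined and continuous on the enlarged space --- but without it both the conjugacy and the Cantor-set identification fail where the Julia set is densest.
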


To prove the theorem we need to introduce the concept of shift space. Let $ \mathbb Z$ be the set of alphabet and consider the space of  infinite sequences $ \sum $ from $ \mathbb Z$ to $ \mathbb Z.$ Denote an element {\bf x} in the space $\sum $ as {\bf x} $ = ( \ldots, x_{-3}, x_{-2}, x_{-1}, x_0, x_1, x_2, x_3, \ldots ) $. Let us denote $ x_{[-k,k]} $ to be a finite word $ (x_i) $ with $ -k \leq i \leq k $. We define a metric as the following:

$$ d({\bf x, y}) =  \begin{cases} 
 		2^{-k},  & \text {if} \    k  \   \text {is the maximal such that} \  x_{[-k,k]} = y_{[-k,k]}, \\ 
		 0,  & \text{if} \  {\bf x} = {\bf y} 
\end{cases} 
$$ 		     
In other words, two points $ {\bf x} $ and $ {\bf y} $ are at $ 2^{-k}$ distance if they agree at their $ (2k +1)$-central blocks.\\ 
The topology induced by this metric is equivalent to the topology defined in \cite{Moser1973StableAR}. Define the shift map $ \sigma : \sum \to \sum $ by $ \sigma:  ( \ldots, x_{-3}, x_{-2}, x_{-1}, x_0, x_1, x_2, x_3, \ldots ) \to  ( \ldots, x_{-2}, x_{-1},  x_0, x_1, x_2, x_3, \ldots ) $. The map $ \sigma$ has the following properties. We refer to \cite{lind2021introduction} for more detail. \\

\begin{lem} 
$ \sum $ is a compact metric space. 
\end{lem}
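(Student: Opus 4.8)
The plan is to deduce compactness of $\sum$ from the standard metric-space fact that a complete and totally bounded metric space is compact (equivalently, I could argue sequential compactness directly). First I would confirm that $d$ is genuinely a metric, in fact an ultrametric. Symmetry and the equivalence $d(\mathbf{x},\mathbf{y}) = 0 \iff \mathbf{x} = \mathbf{y}$ are immediate from the definition, with the convention that when $x_0 \neq y_0$ no central block agrees and $d$ attains its maximal value. The only nontrivial estimate is the strong triangle inequality: if $\mathbf{x},\mathbf{y}$ agree on the central $(2k+1)$-block and $\mathbf{y},\mathbf{z}$ agree on the central $(2\ell+1)$-block, then $\mathbf{x},\mathbf{z}$ agree on the central block of radius $\min\{k,\ell\}$, so $d(\mathbf{x},\mathbf{z}) \le 2^{-\min\{k,\ell\}} = \max\{d(\mathbf{x},\mathbf{y}), d(\mathbf{y},\mathbf{z})\}$.

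Next I would verify completeness. Given a Cauchy sequence $\{\mathbf{x}^{(n)}\}$, for each radius $k$ there is an index beyond which all terms share the same central $(2k+1)$-block; hence each coordinate $x_i$ is eventually constant along the sequence. Defining $\mathbf{x}^{\ast}$ by these eventual values produces an element of $\sum$ with $d(\mathbf{x}^{(n)}, \mathbf{x}^{\ast}) \to 0$, since agreement on the central $(2k+1)$-block forces distance at most $2^{-k}$. This step uses nothing about the size of the alphabet. I would then establish total boundedness: for a prescribed $\varepsilon = 2^{-k}$, two sequences lie within distance $\varepsilon$ exactly when their central $(2k+1)$-blocks coincide, so the $\varepsilon$-balls are indexed by the admissible central blocks $(x_{-k}, \ldots, x_{k})$, and choosing one representative per block gives a finite $\varepsilon$-net provided only finitely many such blocks occur. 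Completeness together with total boundedness then yields compactness.

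The main obstacle is precisely this last finiteness. If the alphabet is taken literally as all of $\mathbb{Z}$, the central blocks are infinite in number and $\sum$ fails to be totally bounded: the sequences with $x^{(n)}_0 = n$ and all other entries fixed are pairwise at the maximal distance and admit no convergent subsequence. To make the statement correct I would restrict $\sum$ to the itineraries actually realized by $f_\lambda$ on the invariant region underlying Theorem~\ref{Cantor set}, where the admissible symbols at each coordinate are confined to the finitely many strips $L_n^2$ that the dynamics can visit. Showing that this effective alphabet is finite, so that each scale yields only finitely many central blocks, is where the genuine content of the lemma resides; once that is in hand, the metric, completeness, and net-construction steps are entirely routine.
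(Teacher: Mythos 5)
Your verification of the metric axioms, your completeness argument, and your counterexample to total boundedness are all correct, and the conclusion you draw from them is right: over the alphabet $\mathbb{Z}$ with the stated $2^{-k}$ metric, $\sum$ is complete but not totally bounded, hence not compact --- the sequences agreeing everywhere except at the coordinate $x_0$ form an infinite, uniformly separated set with no convergent subsequence. Note that the paper contains no proof to compare against: the lemma is stated bare, with a pointer to a reference whose compactness theorem concerns full shifts over a \emph{finite} alphabet, so it does not cover the space actually defined here. Locating precisely where the statement fails is the substantive part of the exercise, and you did it.

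The gap is in your proposed repair. Cutting $\sum$ down to a finite ``effective alphabet'' is incompatible with the role the space plays in Theorem \ref{Cantor set}: there the symbols index the bounded components $B_n$ of the preimage of $\overline{V}$, one for each of the infinitely many poles of $f_\lambda$; every such component meets $\mathcal J$ (each contains a pole), and prepoles are encoded by sequences terminating in the extra symbol $\infty$. Since $\phi$ is asserted to be a bijection of $\mathcal J$ onto all of $\sum$, any finite sub-alphabet captures only a proper compact invariant subset of $\mathcal J$, never the whole Julia set, so your restricted space cannot serve as the target of the conjugacy. The repair consistent with the rest of the paper is to compactify the alphabet rather than truncate it: take the symbol space to be the one-point compactification $\mathbb{Z}\cup\{\infty\}$, which is compact and metrizable (homeomorphic to $\{0\}\cup\{1/k : k\geq 1\}$), and metrize $\sum$ so that symbols of large modulus are close to one another and to $\infty$. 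With that topology each coordinate ranges over a compact space, so your complete-plus-totally-bounded scheme goes through (at scale $2^{-k}$ only finitely many symbols are distinguishable, the rest being lumped near $\infty$), or one can invoke Tychonoff for a countable product of compact metrizable spaces. This compactified topology also makes the sequences ending in $\infty$ genuine limit points, as the treatment of prepoles requires, and it --- not the discrete-alphabet metric printed in the paper --- is the topology of Moser's construction that the paper claims to reproduce.
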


\vspace{0.5 cm} 

\begin{lem}
$ \sigma: \sum \to \sum $ is a homeomorphism. 
\end{lem}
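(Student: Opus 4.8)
The plan is to establish that $\sigma$ is a continuous bijection whose inverse is also continuous. I would first exhibit the inverse explicitly: writing the action coordinatewise as $\sigma(\mathbf{x})_n = x_{n+1}$, the opposite shift $\tau$ defined by $\tau(\mathbf{x})_n = x_{n-1}$ satisfies $\sigma \circ \tau = \tau \circ \sigma = \mathrm{id}$, so $\sigma$ is a bijection with $\sigma^{-1} = \tau$. This is where the bi-infinite (two-sided) nature of $\sum$ is essential: on a one-sided shift the analogous map would only be surjective, not injective, so bijectivity here rests on the index set being all of $\mathbb Z$.

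Next I would prove continuity directly from the metric, and in fact show $\sigma$ is Lipschitz. Suppose $d(\mathbf{x}, \mathbf{y}) = 2^{-k}$, so that $x_{[-k,k]} = y_{[-k,k]}$, i.e. $x_j = y_j$ for $-k \le j \le k$. Then $\sigma(\mathbf{x})_n = x_{n+1}$ equals $\sigma(\mathbf{y})_n = y_{n+1}$ whenever $-k \le n+1 \le k$, that is, for $-(k-1) \le n \le k-1$. Hence $\sigma(\mathbf{x})$ and $\sigma(\mathbf{y})$ agree on their $(2k-1)$-central block, giving $d(\sigma(\mathbf{x}), \sigma(\mathbf{y})) \le 2^{-(k-1)} = 2\, d(\mathbf{x}, \mathbf{y})$. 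Thus $\sigma$ is Lipschitz with constant $2$, hence continuous. The identical estimate applied to $\tau = \sigma^{-1}$, shifting the central block in the opposite direction, shows $\sigma^{-1}$ is Lipschitz with constant $2$ as well.

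Having shown $\sigma$ is a continuous bijection with continuous inverse, I conclude it is a homeomorphism. Alternatively, I could shorten the argument by invoking the previous lemma: since $\sum$ is a compact metric space, hence compact Hausdorff, any continuous bijection of $\sum$ onto itself is automatically a homeomorphism, so once $\sigma$ is known to be a continuous bijection there is nothing further to verify. I do not anticipate a genuine obstacle here; the only points requiring care are bookkeeping the index shift in the central block so that the Lipschitz constant comes out correctly, and being explicit that injectivity relies on the sequences being two-sided.
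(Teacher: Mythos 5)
Your proof is correct. Note, however, that the paper itself offers no proof of this lemma at all: it is stated as one of several standard properties of the shift, with the reader referred to the cited symbolic-dynamics reference (Lind--Marcus), so your argument supplies details the paper leaves implicit rather than paralleling an argument in the text. What you give is the standard one: the explicit inverse $\tau$ (using that the sequences are two-sided, exactly as you emphasize), plus the metric estimate that agreement on the central block $[-k,k]$ forces agreement of the shifted sequences on $[-(k-1),k-1]$, yielding the Lipschitz bound $d(\sigma(\mathbf{x}),\sigma(\mathbf{y})) \le 2\,d(\mathbf{x},\mathbf{y})$ and the same for $\tau$. One point in favor of your primary route over the shortcut you mention at the end: the compactness alternative leans on the preceding lemma, which is delicate here because the paper's alphabet is all of $\mathbb Z$ (the space of bi-infinite sequences over an infinite discrete alphabet is \emph{not} compact in this metric --- the points $\mathbf{x}^{(m)}$ with $x^{(m)}_0 = m$ and all other entries $0$ are pairwise at distance bounded below, so no subsequence converges). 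Your direct argument, proving continuity of $\sigma^{-1}$ by the same Lipschitz estimate, is immune to that issue and is therefore the right proof to record in this setting.
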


\vspace{0.5 cm} 

\begin{thm}
The set $ \sum $ is a Cantor set. 
\end{thm}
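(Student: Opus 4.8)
\section*{Proof proposal}

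The plan is to invoke Brouwer's topological characterization of the Cantor set: a topological space is homeomorphic to the standard middle-thirds Cantor set precisely when it is nonempty, compact, perfect (has no isolated points), totally disconnected, and metrizable. Two of these five properties are already in hand — metrizability is built into the definition of $d$, and compactness is the content of the preceding lemma — so the remaining work reduces to verifying that $\sum$ is nonempty, perfect, and totally disconnected, each of which I would read off directly from the shape of the metric balls.

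First I would record that shape. For a point ${\bf x}$ and an integer $k \ge 0$, the cylinder $C_k({\bf x}) = \{ {\bf y} : y_{[-k,k]} = x_{[-k,k]} \}$ is at once the open ball of radius $2^{-(k-1)}$ and the closed ball of radius $2^{-k}$ about ${\bf x}$; its complement is the union of the cylinders obtained by altering at least one of the central $2k+1$ coordinates, each of which is open, so $C_k({\bf x})$ is simultaneously open and closed. These clopen cylinders form a neighborhood basis at every point, and this is the structural fact on which everything else rests.

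For perfectness, given ${\bf x}$ and $\epsilon > 0$ I choose $k$ with $2^{-k} < \epsilon$ and define ${\bf y}$ to agree with ${\bf x}$ on $[-k,k]$ while differing from ${\bf x}$ in the coordinate $k+1$; then the maximal block of agreement is exactly $[-k,k]$, so $d({\bf x},{\bf y}) = 2^{-k} < \epsilon$ with ${\bf y} \ne {\bf x}$, and no point is isolated. For total disconnectedness, if ${\bf x} \ne {\bf y}$ they first disagree at some coordinate $m$, and with $k = |m|$ the clopen set $C_k({\bf x})$ contains ${\bf x}$ but not ${\bf y}$, so the two points lie in disjoint clopen sets and no connected subset can contain both. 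Since $\sum$ plainly has points (the constant sequences, say), it is nonempty, all five hypotheses of Brouwer's theorem are met, and the homeomorphism with the Cantor set follows. Together with the homeomorphism $\sigma$ of the preceding lemma, this realizes $(\sum,\sigma)$ as a two-sided shift, which is what the statement of Theorem \ref{Cantor set} will ultimately use.

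The step I expect to be the genuine obstacle is compactness — the one property I am borrowing rather than establishing. With the countably infinite alphabet $\mathbb Z$, the cylinder decomposition at coordinate $0$ already splits $\sum$ into infinitely many disjoint clopen pieces, and the full product $\mathbb Z^{\mathbb Z}$ under this metric is homeomorphic to Baire space (the irrationals) rather than to a Cantor set: the sequences supported at the origin are mutually equidistant and admit no convergent subsequence. Consequently the compactness asserted earlier cannot hold for the entire sequence space, and must in fact rest on an effective restriction of the admissible symbols — a finite (or suitably locally finite and closed) sub-alphabet at each coordinate, dictated by the geometry of the strips $L_k$ that code $\mathcal J_\lambda$. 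I would therefore make certain, before quoting the compactness lemma, that the symbol set actually entering the coding is finite, since perfectness and total disconnectedness survive for the infinite alphabet but compactness — and hence the Cantor property itself — does not.
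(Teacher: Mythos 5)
The paper offers no proof of this theorem at all: it is stated, together with the preceding compactness lemma and the homeomorphism lemma for $\sigma$, as a fact imported from the symbolic-dynamics reference. So the only comparison available is with the standard textbook argument, and your route — Brouwer's characterization of the Cantor set plus the clopen-cylinder analysis — is exactly that argument. Your positive verifications are correct: the cylinder $C_k({\bf x})$ is indeed both the closed ball of radius $2^{-k}$ and the open ball of radius $2^{-(k-1)}$, hence clopen; the perturbation at coordinate $k+1$ shows no point is isolated; separation of distinct points by clopen cylinders gives total disconnectedness; and nonemptiness is trivial.

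The obstacle you flag in your final paragraph, however, is not a caution but a refutation, and you are right to refuse to take the compactness lemma on faith. With the paper's own definition — $\sum$ is the set of \emph{all} maps from $\mathbb Z$ to the countably infinite alphabet $\mathbb Z$ — the space is not compact: the projection ${\bf x} \mapsto x_0$ is a continuous surjection onto the discrete, non-compact space $\mathbb Z$, or equivalently the sets $\{{\bf x} : x_0 = n\}$, $n \in \mathbb Z$, form an infinite partition of $\sum$ into nonempty clopen sets admitting no finite subcover. Re-indexing the alphabet and the coordinates identifies $\sum$ with Baire space $\mathbb N^{\mathbb N}$, which is perfect, totally disconnected and metrizable but not homeomorphic to the Cantor set. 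So both the compactness lemma and the present theorem are false as stated; the results in the cited reference require a finite alphabet. The error propagates to the main result: the proof of Theorem \ref{Cantor set} invokes ``$\sum$ and $\mathcal J$ are compact'' to conclude that $\phi^{-1}$ is continuous, and simultaneously claims $\phi$ maps $\mathcal J$ onto $\sum$ — impossible, since the continuous image of the compact set $\mathcal J$ cannot be the non-compact full shift. The repair is the one you point toward: the correct coding object is not the full two-sided shift on $\mathbb Z$ but the compact space of admissible sequences actually realized by the dynamics, including the terminating sequences ending in $\infty$ that encode prepoles; its compactness and Cantor structure should be deduced \emph{from} $\mathcal J$ (a continuous bijection from a compact space to a Hausdorff space is a homeomorphism), rather than asserted for the abstract symbol space and transported to $\mathcal J$. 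In short, your argument is a correct proof of the correct statement (finite or suitably compactified alphabet), while the statement as printed has no correct proof.
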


\vspace{0.5 cm} 

\begin{lem}
If $ \lambda \in A_i , i = 1, 2, 3, 4$, there is one multiply connected unbounded Fatou set. \\
\end{lem}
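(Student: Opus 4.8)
The plan is to deduce all three asserted properties of the Fatou set — that there is a single component, that it is unbounded, and that it is multiply connected — from the already-established fact (Theorem \ref{Cantor set}) that $\mathcal J_\lambda$ is a Cantor set, combined with the basin structure supplied by Lemma \ref{attract_singular_values}. Write $S = \hat{\mathbb C} \setminus \mathcal J_\lambda$ for the Fatou set. First I would record that, being a Cantor set, $\mathcal J_\lambda$ is compact, perfect, and totally disconnected; in particular it is zero-dimensional and so carries trivial one-dimensional \v{C}ech cohomology. By Alexander duality (equivalently, by the classical fact that a totally disconnected compact subset of the sphere cannot separate it), $\hat{\mathbb C} \setminus \mathcal J_\lambda$ is connected. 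Hence $S$ is a single Fatou component.

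Next I would establish multiple connectivity. Since $S$ is connected, it is simply connected precisely when its complement $\hat{\mathbb C} \setminus S = \mathcal J_\lambda$ is connected. But $\mathcal J_\lambda$ is a Cantor set, hence totally disconnected and infinite, so it is disconnected. Therefore $S$ is not simply connected; indeed, each point of $\mathcal J_\lambda$ is its own complementary component, so $S$ is in fact infinitely connected, and in particular multiply connected.

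It remains to show that $S$ is unbounded, and this is where I would invoke the asymptotic tracts. By Lemma \ref{attract_singular_values}, for $\lambda \in A_i$ the ball $B(\lambda + i, M)$ is forward invariant and lies in the immediate basin of the attracting fixed point, so an open neighborhood of the asymptotic value $\lambda + i$ is contained in $S$. Let $T$ be the asymptotic tract over $\lambda + i$. By definition of an asymptotic tract, $f_\lambda(z) \to \lambda + i$ as $z \to \infty$ inside $T$, so for every $z$ sufficiently far out in $T$ one has $f_\lambda(z) \in B(\lambda + i, M) \subset S$; by complete invariance of the Fatou set, such $z$ lie in $S$. Since $T$ is unbounded, this exhibits an unbounded subset of $S$, and by connectedness (first step) it lies in the unique component. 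Combining the three steps yields a single, unbounded, multiply connected Fatou component.

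The step I expect to demand the most care is the unboundedness argument: one must check that the tail of the tract $T$ maps into the invariant ball $B(\lambda+i,M)$ on an \emph{open} unbounded set, rather than merely limiting onto the value $\lambda+i$ along a thin curve, so that a genuine unbounded open piece of $T$ lands in $S$. The topological inputs — nonseparation by a zero-dimensional compactum and the characterization of simple connectivity via the complement — are standard, but I would state the separation fact explicitly (or cite it) instead of treating it as self-evident.
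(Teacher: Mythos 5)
Your proposal is circular in the context of this paper. The entire argument rests on taking Theorem \ref{Cantor set} (the Julia set is a Cantor set) as ``already-established,'' but at the point where this lemma appears, that theorem has only been \emph{stated}, not proved: its proof comes immediately after this lemma and depends on it. Specifically, the proof of Theorem \ref{Cantor set} begins by enclosing all the post-singular points in a single set and forming $V = \mathbb{\hat C} \setminus V'$, whose iterated preimages give the nested sets $B_{\bf n}$; that step is justified precisely because this lemma guarantees that all singular values lie in one completely invariant, unbounded Fatou component (the basin of the attracting fixed point). So you cannot use the Cantor structure of $\mathcal J_\lambda$ as an input here --- it is the output this lemma is meant to help establish.

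The paper's own proof is direct and avoids this: it takes $U$ to be the Fatou component containing the attracting fixed point supplied by Lemma \ref{attract_singular_values}, shows for $|\lambda|$ large that a ball $B(\lambda + i, 1+\epsilon)$ around the asymptotic value sits inside $U$ and captures the critical value $\lambda$, extends this to all $\lambda \in A_1$ by conjugacy, and then lifts a path $\gamma \subset U$ joining $\lambda$ to $\lambda + i$ under all inverse branches; the lifts are unbounded curves through the pre-zeros, each meeting $U$, which forces $U$ to be completely invariant, unbounded, and (since it then surrounds infinitely many poles and prepoles) multiply connected. Your topological deductions --- that a compact totally disconnected subset of the sphere cannot separate it, that connectivity of the complement characterizes simple connectivity, and that the Fatou set of the complement of a Cantor set is unbounded --- are all correct as implications, and they would be a legitimate way to derive this lemma \emph{from} the Cantor set theorem in a paper with a different logical architecture; but here they invert the order of dependence, and so do not constitute a proof of this lemma.
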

 
\begin{proof} Without loss of generality we can assume that $ \lambda $ is in the unbounded component $ A_1$ and $U$ be the Fatou component containing the attracting fixed point $p_0$ of $f_\lambda$. $ f(U) \subset U$ and $U$ must contain a singular value. Without loss of generality we can assume that $ \lambda + i$ is in $U.$ We claim that all other singular values are in $U$. Choose a sufficiently large $M>0$ with $ |\lambda| > M $ such that $ \Im (\lambda^2) > L$ for $ L>0 $ sufficiently large. Then a small neighborhood around $ \lambda + i$ contains $f_\lambda(\lambda)$. If necessary by making $ M$ large, we can get a ball $ B(\lambda + i, 1+\epsilon) $ such that $ B(\lambda + i, 1+\epsilon) \in U$. Therefore, for $ \lambda \in A_1$ with $ |\lambda| > M$ for sufficiently large $M>0$, $\lambda$ is in the the unbounded Fatou component containing the periodic point. We have that each of the $f_\lambda, f_{\lambda'}, $ for $ \lambda, \lambda' \in A_1$, are conformally conjugate and preserves the dynamics. So the singular value $ \lambda $ is in the Fatou component containing $ \lambda + i$ and the periodic fixed point for $ \lambda \in A_1$.  \\

\noindent
Define a path $ \gamma: [0,1] \to U$ such that $ \gamma(0) = \lambda $ and $ \gamma(1)= \lambda + i $.  Each branch of $f_\lambda^{-1}(\gamma) $ is an infinite path joining pre-zeros. So all pre-image $f_\lambda^{-1}(\gamma) $  must intersect the unbounded periodic Fatou component $U.$ Then $ U$ is completely invariant. Using lemma \ref{attract_singular_values}, we have that all singular values are in $U.$ 
\end{proof}

\begin{proof}
${of \   Theorem \   \ref{Cantor set}: } $ Let $P(f)$ denotes the set of post-singular points. Since all singular values are attracted to the attracting fixed point, we can enclose the post-singular points with a single set except for a finite number of points outside. Let $V'$ the set of all singular and post-singular points. Let $V = \mathbb {\hat C} \setminus V'$ be the complement of $V'$ in $\mathbb {\hat C} .$ Then $\overline V$ is a neighborhood of the points in the Julia set $\mathcal J$. All inverses of $\overline V$ under $ f^{-1} $ is well defined and have infinitely many closed bounded components. Since infinity is an essential singularity and contained in $ \overline V$, each of the bounded components contain a pole. Let $ B_n$ denotes the bounded components indexed by the pole(describe the indexing). Denote the $j-$th pre-image of $ B_n$ under $ f^{-1} $ by $ B_{\bf n}$ where $ {\bf n}$ denotes a vector defined as $ {\bf n} = (n_0, n_1, \ldots, n_j) $. Suitably choosing the branches, we can get a nested sequences of closed bounded sets, namely, $ B_{\bf n} $ such that $ B_{n_j, n_{j-1}, \ldots, n_1} \subset B_{ n_{j-1}, \ldots, n_1} \subset \ldots B_{n_1}$ for ${\bf n} = (n_j, n_{j-1}, \ldots, n_1)$. Also note that diam$(B_{\bf n}) $ tends to zero as $ j \to \infty.$ Therefore each of the set $ B_{\bf n}$ is  singleton and contains a point of the Julia set $ \mathcal J. $ Note that $ \partial B_{\bf n} \subset \mathcal F$ and $ B_{\bf n} \cap \mathcal J \neq \emptyset.$ Since $ f^N(B_{\bf n} \cap \mathcal J) = \mathcal J$, for some $N,$  we can conclude that $ \mathcal J$ is a totally disconnected set and $ \mathcal J = \cup_{{n_j} = 1}^{\infty} \cap_{j = 1}^{\infty} B_{\bf n} $.  \\

Assign a point $ \overline {s}  = ( s_1, s_2, \ldots, ) $ to a point $ z $ in the set $ \mathcal J $ defined by $ {\bf s} = \phi(z) $ such that $ z \in B_{\overline s} $, i.e. $ z \in B_{s_1}\cap B_{(s_1, s_2)}\cap \ldots $. In other word, the point $z$ should also satisfy $ f^{n-1}(z) \in B_{(s_1, s_2, \ldots, s_n)} $. For a point $z$ with $f^m(z) = \infty, $  we consider $ f^{m-1}(z) \in B_{(s_1, s_2, \ldots, s_m, \infty )} $. We need to show that $\phi $ is a bijection. Claim that $ \phi(z_1) \neq \phi(z_2) $ if $ z_1 \neq z_2.$ If not, there is $ z_1 \neq z_2$ such that $ \phi(z_1) = \phi(z_2) .$ There exist $\bf {s} $ and $ \bf t$ such that $ {\bf {s}} = \phi(z_1) = \phi(z_2) = {\bf {t}} $. Let $ \bf {s_i}$ denotes a sequence defined as  ${ \bf {s_i}} = ( s_1, s_2, \ldots, s_i) $ and $ g_{\bf m_j} $ denotes an inverse branch of $ f^j$. There exist two inverse branches $  g_{\bf s_i}, g_{\bf t_j} $ so that $ g_{\bf s_i}(B_{\bf s_i})  \cap g_{\bf t_j}(B_{\bf t_j}) = \emptyset$ with $ g_{\bf s_i} (f^i(z_1)) = z_1$( and $ g_{\bf t_j} (f^j(z_2)) = z_2$ ). Since $ B_{\bf s_i} \cap P(f) = \emptyset $ and $ B_{\bf t_j} \cap P(f) = \emptyset $ we can analytically extend these inverse branches to the set $V.$ Consider the branches  $  g_{s_i}, g_{ t_j} $ of $ f^{-1} $ so that $ g_{s_i}(f^i(z_1)) = f^{i-1}(z_1) $ and $ g_{t_j}(f^j(z_2)) = f^{j-1}(z_2) $. By assumption both $ f^{i-1}(z_1) $ and $ f^{j-1}(z_2) $ are in $ B_{\bf t_j} $ implies that $  g_{s_i}= g_{ t_j} $. Inductively all other branches of  $f^{-1}$ are also equal and that implies $ z_1 = z_2. $ For $ {\bf s_i} = ( s_1, s_2, \ldots, \infty) $ and $ {\bf t_j} = ( t_1, t_2, \ldots, \infty ) $ we have that $ z_1 = g_{s_1} \circ g_{s_2} \circ \ldots \circ g_{s_i} (\infty) = g_{t_1} \circ g_{t_2} \circ \ldots \circ g_{t_j} (\infty) = z_2.$ Hence the map $ \phi(z) $ is an injection. \\

\begin{figure}
\centering
\includegraphics[width=\textwidth]{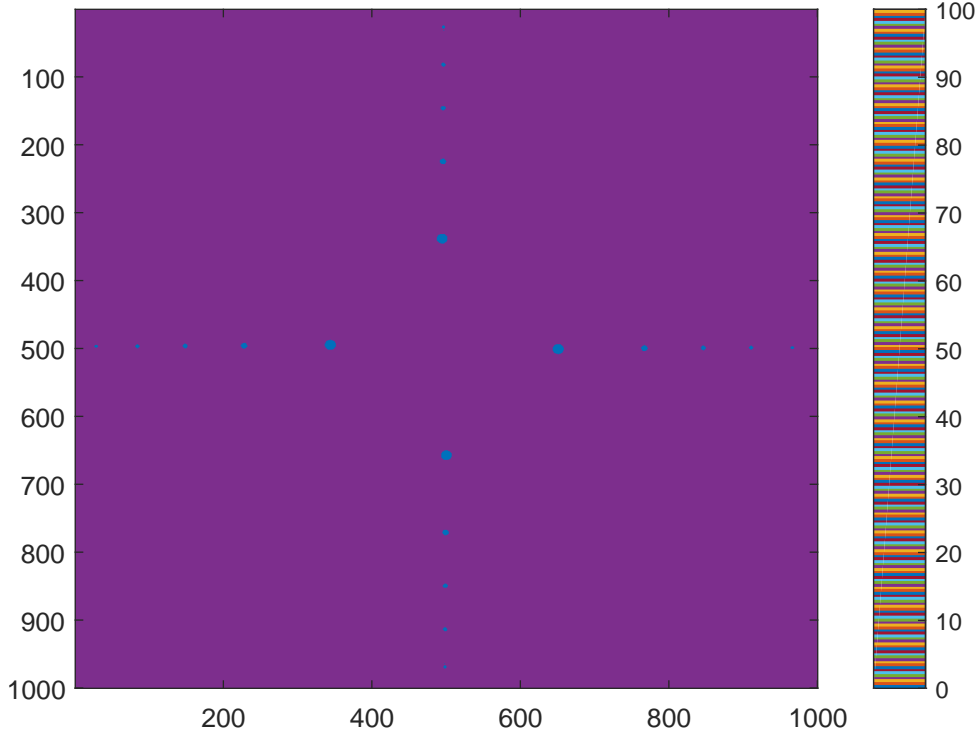} 
\caption{ Julia set is a Cantor set for $ \lambda = 4 + 4i $} 
\end{figure}

Let ${ \bf w } = ( w_1, w_2, \ldots, )$ be an element in the sequence space $ \sum $. Let's consider $ {g_{\bf w}'} $ be the inverse branches of $f$. For each of the inverse branch $ g_{w_i}' $ on $ V$ consider the set $ N_{w_i} = g_{w_i}' (V) \cap \mathcal J.$ Then each of the set $ N_{w_i} $ is closed bounded set and the diameter tends to zero. Now choose the branches $ g_{w_i} $ of $ f^{-1} $ on $  N_{w_i} $ in such a way that $   g_{w_i}(N_{w_i}) = N_{w_{i+1}} \subset N_{w_i} $. So we have $ g_{w_1} \circ g_{w_2} \ldots  \circ g_{w_n}(V) \subset  g_{w_1} \circ g_{w_2} \ldots  \circ g_{w_{n-1}}(V) \subset  g_{w_1}(V) $. This is a sequence of nested closed sets with diameter of the sets tends to zero. Therefore the set $ \cap_{i = 1}^{\infty} N_{w_i} $ is singleton and contains a point $ z $ of $ \mathcal J.$ So $ \phi(z) = {\bf w}.$ If $ { \bf w } = ( w_1, w_2, \ldots, w_k, \infty) $, then $ g_{w_1} \circ g_{w_2} \ldots \circ g_{w_k}(\infty) $ is a prepole and a single point. Hence the map $ \phi $ is onto. \\

Since $ \sum$ and $ \mathcal J $ are compact and $ \phi $ is a continuous bijection, the inverse map $ \phi^{-1} $ is also  continuous. \\

If $ z_0$ is a point in $ \mathcal J $, there is a $ \bf t $ in $ \sum $ so that $ \phi(z_0) = \bf t.$ That implies $ f^{n-1} (z_0) \in B_{(t_1, t_2, \ldots, t_n)} $. Hence the map $ \sigma(\phi(z_0)) = \bf {t+1} $ shifts the element of $ \bf t $ one unit to the right. On the other way, $ \bf t $ is uniquely determined by $ \phi $ for the point $ z_0 \in \mathcal J.$ So $ f(z_0) \in B_{(t_1)}. $ Therefore the map $ \phi(f(z_0)) $ determines the point $ \bf t $ shifted one unit to the right, i.e. $ \bf {t+1}.$ Hence the following diagram is commutative.

$$
\begin{tikzcd}
\mathcal J \ar{d}{\phi} \ar{r}{f}          & \mathcal J  \ar{d}{\phi} \\
\sum  \ar{r}{\sigma} & \sum
\end{tikzcd}
$$ 

Thus $ \mathcal J $ is a Cantor set. 

\end{proof}

Pictures of the parameter space give nice topological and combinatorial structure of the hyperbolic components. It will be interesting to see the bifurcation of the periodic cycle on the boundary of the bounded hyperbolic components. We plan to explore this further in our future work. \\

\pagebreak

\bibliography{mybib1}{}



\end{document}